\newcolumntype{x}[1]{>{\centering\arraybackslash\hspace{0pt}}m{#1}}
\renewcommand{\sectionmark}[1]%
{\markboth{#1}{}}
\theoremstyle{definition}
\newtheorem{ex}{\bfseries \upshape Example}[section]
\newtheorem{rem}[ex]{\bfseries \upshape Remark}
\newtheorem{conj}[ex]{\bfseries \upshape Conjecture}
\newtheorem{prop}[ex]{\bfseries \upshape Proposition}
\newtheorem{lem}[ex]{\bfseries \upshape Lemma}
\newtheorem{thm}[ex]{\bfseries \upshape Theorem}
\theoremstyle{plain}
\newtheorem{cor}[ex]{\bfseries \upshape Corollary}
\newenvironment{prf}{\begin{proof}[{\bf Proof}]}{\end{proof}}
\newcommand{\N}{\ensuremath{\mathds{N}}}	
\newcommand{\Q}{\ensuremath{\mathds{Q}}}
\newcommand{\dif}{\operatorname{d}}
\newcommand{\grw}{ \operatorname{gr}^{\operatorname{W}}}
\newcommand{\grwl}{ \operatorname{gr}^{\operatorname{W},\operatorname{L}}}
\newcommand{\filw}{ \operatorname{Fil}^{\operatorname{W}}}
\newcommand{\filwle}{ \operatorname{Fil}^{\operatorname{W},\operatorname{L}}}
\DeclareMathOperator{\MD}{\mathcal{MD}}
\DeclareMathOperator{\qMZV}{\mathsf{qMZV}}
\DeclareMathOperator{\QZ}{\mathsf{Z}}
\DeclareMathOperator{\OZ}{\mathsf{Z}}
\numberwithin{equation}{section}
\begin{document}
\title{{ \bf A short note on a conjecture of Okounkov about a $q$-analogue of multiple zeta values}}
\author{{\sc Henrik Bachmann, Ulf K\"uhn}}
\date{\today}
\maketitle

\begin{abstract}
In \cite{ao} Okounkov studies a specific $q$-analogue of multiple zeta values and makes some conjectures on their algebraic structure. In this note we compare Okounkovs $q$-analogues to the generating function for multiple divisor sums defined in \cite{bk}. We also state a conjecture
on their dimensions that complements Okounkovs conjectural formula and present some numerical  
evidences for it.
\end{abstract}

\section{Introduction}
Multiple zeta values are natural generalizations of the Riemann zeta values that are defined for integers $s_1 > 1$ and $s_i \geq 1$ for $i>1$ by
\[ \zeta( s_1 , \dots , s_l ) :=  \sum_{n_1 >n_2 >\dots>n_l>0 } \frac{1}{n_1^{s_1} \dots n_l^{s_l} } \,. \]
Because of its occurence in various fields of mathematics and physics these real numbers 
are of particular interest.
%
In  \cite{ao} Okounkov  discusses a conjectural connection from enumerative geometry of some Hilbert schemes to
a specific $q$-analogue $\mathsf{Z}(s_1,...,s_l)$ of the multiple zeta-values. He denotes by $\qMZV$ the $\Q$-algebra generated by these. 
In this short note we want to discuss the connection of these  multiple $q$-zeta values to the algebra  $\MD$ of generating functions for multiple divisor sums  $[s_1,..,s_l]$ defined by the authors in \cite{bk}. More precisely we have

\begin{thm} 
Let $\MD^\sharp = \langle\,[s_1,...,s_l] \in \MD\,| s_i>1 \, \forall\,i \mbox{ or } s_1= \emptyset\, \rangle_\Q.$
\begin{enumerate}[i)]
\item   The sub vector space $\MD^\sharp$ is in fact a sub algebra of $\MD$.
\item We have $\qMZV = \MD^\sharp$, in particular the $\Q$-vector space generated by the 
$ \OZ(s_1,...,s_l)$ is closed under multiplication.
\item  We have $ q \frac{d}{d q} \mathsf{Z}(k) \in \qMZV$ for all $k \ge 2$.
\end{enumerate}
\end{thm}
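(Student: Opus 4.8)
The plan is to reduce part~(iii) to a statement about single brackets and then to run the explicit formula for $q\frac{d}{dq}$ on $\MD$ from \cite{bk}. By part~(ii) we already know $\qMZV=\MD^\sharp$, so it suffices to prove $q\frac{d}{dq}\mathsf{Z}(k)\in\MD^\sharp$. Using the dictionary of~(ii), the single-index value $\mathsf{Z}(k)$ is a $\Q$-linear combination of single brackets $[j]$ with $j\ge 2$; since $q\frac{d}{dq}$ is $\Q$-linear, everything reduces to showing
\[ q\frac{d}{dq}[j]\in\MD^\sharp\qquad\text{for all }j\ge 2. \]
Here I would record that $q\frac{d}{dq}$ raises the weight filtration $\filw$ by $2$ but is \emph{not} weight-homogeneous, so $q\frac{d}{dq}[j]$ is an element of $\filw_{j+2}\MD$ that in general mixes several weights.

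Next I would invoke the derivation formula of \cite{bk}, which guarantees $q\frac{d}{dq}[j]\in\MD$ and writes it as an explicit $\Q$-linear combination of brackets of weight at most $j+2$ and depth at most $2$. Separating this combination into its admissible part (all entries $\ge 2$) and its non-admissible part (some entry equal to $1$), the admissible part already lies in $\MD^\sharp=\qMZV$ by~(ii). The whole difficulty is therefore to show that the non-admissible \emph{boundary} brackets, those of the form $[\,\ast,1\,]$ and $[\,1,\ast\,]$, together contribute an element of $\MD^\sharp$.

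For even $j$ this can be seen without any bracket gymnastics: the bracket $[j]$ is a scalar multiple of the Eisenstein series $E_j$ up to an additive constant, so $q\frac{d}{dq}[j]$ lies in the ring $\Q[E_2,E_4,E_6]$ of quasimodular forms by Ramanujan's differential equations. As a concrete instance, a direct computation via the classical convolution identity for $\sum_{i+j=N}\sigma_1(i)\sigma_1(j)$ gives
\[ q\frac{d}{dq}[2]=\tfrac16[2]-2\,[2]^2+5\,[4], \]
whose terms are products of admissible brackets and hence lie in $\MD^\sharp$ by part~(i). Since $E_2,E_4,E_6\in\qMZV$ and $\qMZV$ is an algebra, we have $\Q[E_2,E_4,E_6]\subseteq\qMZV$, and the same argument settles every even $j$.

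The main obstacle is the odd case $j\ge 3$, where $[j]$ is not (quasi)modular (there are no nonzero quasimodular forms of odd weight) and no such shortcut is available. Here I would argue directly inside $\MD$: using the quasi-shuffle (stuffle) product of \cite{bk} one rewrites the boundary brackets appearing in the derivation formula through admissible brackets of higher depth, such as $[a,b]$ with $a,b\ge 2$ and $a+b=j+2$, and products thereof, checking that the coefficients produced by the derivation formula are exactly those for which the index-$1$ terms reorganise into admissible ones. Equivalently, the claim is that although $q\frac{d}{dq}$ does not preserve $\MD^\sharp$ on all of $\MD$, it does send each single generator $[j]$ back into $\MD^\sharp$. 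Pinning down these coefficients and proving the resulting cancellation in full generality, rather than merely in low weight, is the step I expect to require the most care.
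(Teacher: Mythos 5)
Your reduction of (iii) to showing $\dif[j]\in\MD^\sharp$ for all $j\ge 2$ matches the paper, and your even-weight argument is a genuinely different but valid route: since $[j]=G_j-c_j$ for a constant $c_j$, one has $\dif[j]=\dif G_j\in\Q[G_2,G_4,G_6]\subseteq\qMZV$, and your identity $\dif[2]=\tfrac16[2]-2[2]^2+5[4]$ checks out. The problem is the odd case, which you explicitly leave open: you assert that the boundary brackets $[k-1,1]$ produced by the derivation formula ``reorganise into admissible ones'' via the quasi-shuffle product, but the quasi-shuffle relation $[1]\cdot[k-1]=[1,k-1]+[k-1,1]+[k]+\cdots$ involves the equally non-admissible terms $[1]$ and $[1,k-1]$, so it cannot by itself place $[k-1,1]$ in $\MD^\sharp$; a second, independent relation is needed, and you do not produce one. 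The paper's key idea, which is absent from your proposal, is that the formula of [BK, Theorem 3.5] expresses $\binom{k-2}{s_1-1}\frac{\dif[k-2]}{k-2}$ in terms of brackets for \emph{every} decomposition $k=s_1+s_2$, with $[k-1,1]$ as the only non-admissible term on the right; taking two different decompositions (the paper uses $(s_1,s_2)=(1,k-1)$ and $(2,k-2)$) gives the coefficient of $[k-1,1]$ two distinct values, namely $-(k-2)$ and $-2(k-2)$, so comparing the two identities eliminates $\dif[k-2]$, exhibits $[k-1,1]\in\MD^\sharp$, and hence gives $\dif[k-2]\in\MD^\sharp$ for all $k\ge 4$ uniformly in the parity. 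Without such an elimination step your odd case is a statement of the goal, not a proof.

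A second gap: you invoke parts (i) and (ii) but never prove them, and they are part of the statement. The paper proves (i) by checking that the only potentially non-admissible correction term in the quasi-shuffle product of two admissible brackets has coefficient $\lambda^1_{a,b}+\lambda^1_{b,a}=\left((-1)^{a-1}+(-1)^{b-1}\right)\binom{a+b-2}{a-1}\frac{B_{a+b-1}}{(a+b-1)!}$, which vanishes for $a,b>1$ because either the sign factors cancel or $B_{a+b-1}=0$ for odd $a+b-1>1$; and it proves (ii) by a dimension count: the polynomials $(1-t)^{s-j}Q^E_j(t)$ for $2\le j\le s$ are linearly independent in the $(s-1)$-dimensional space of polynomials of degree at most $s-1$ without constant term, so any family $Q$ with $\deg Q_s\le s-1$ (in particular Okounkov's) generates the same space $\MD^\sharp$. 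These arguments, or substitutes for them, need to appear in a complete write-up.
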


The first two statements are merely a reformulation of results implictly contained in \cite{bk}. The third is 
direct consequence of some explicit formula given in \cite{bk}. It gives some evidence to the
conjecture of Okunkov, that the operator $d$ is a derivation on $\qMZV$.

Acknowledgements: We thank J.~Zhao for pointing out some minor mistakes in the first version of this notes. 

\section{$q$-analogues of multiple zeta values}
In the following we fix a subset  $S \subset \N$, which we consider as the support for
  index entries, i.e. we assume $s_1,\dots,s_l \in S$. 
For each $s\in S$ we let  $Q_s(t) \in \Q[t]$ be a polynomial with $Q_s(0)=0$ and $Q_s(1) \neq 0$.
We set $Q = \left\{ Q_s(t) \right\}_{s\in S}$.
A sum of the form 
\begin{equation} \label{eq:zq}
 Z_Q(s_1,\dots,s_l) := \sum_{n_1 > \dots > n_l > 0} \prod_{j=1}^l \frac{Q_{s_j}(q^{n_j})}{(1-q^{n_j})^{s_j}} 
\end{equation}
with polynomials $Q_s$ as before, defines
 a \emph{$q$-analogue of a multiple zeta-value} of  weight $k=s_1+ \dots +s_l$ and length $l$.  
Observe only because of $Q_{s_1}(0)=0$ this defines an element of $\Q[[q]]$.   
This notion is  due to the  identity
 \begin{align*}
 \lim\limits_{q \rightarrow 1}{(1-q)^k Z_Q(s_1,\dots,s_l)} &= \sum_{n_1 > \dots > n_l > 0} 
  \prod_{j=1}^l     \lim\limits_{q \rightarrow 1}{ 
  \left( Q_{s_j}(q^{n_j}) \frac{(1-q)^{s_j}}{(1-q^{n_j})^{s_j}} \right)}\\
 &=     
  Q_{s_1}(1) \dots Q_{s_l}(1) \cdot \zeta(s_1,\dots,s_l) \,.
  \end{align*}
Here we used that $ \lim\limits_{q \rightarrow 1}{ 
   (1-q)^{s}/(1-q^{n})^{s} = 1/n^s }$ and
with the same arguments as in \cite{bk} Proposition 6.4, the above identity can be justified for all 
$(s_1,...,s_l)$ with
$s_1 > 1$. Related definition for $q$-analogues of multiple zeta values are given in \cite{db}, \cite{yt}, \cite{wz} and \cite{ooz}. 
 It is convenient to define $Z_Q(\emptyset)=1$ and then we denote the vector space spanned by all these elements by
\begin{equation} \label{eq:zqs}
 Z(Q,S) := \big <\,  Z_Q(s_1,\dots,s_l) \big| \,l \ge 0 \mbox{ and } s_1,\dots,s_l \in S \big>_\Q \,. 
\end{equation}
Note by the above convention we have that $\Q$ is contained in this space.

\begin{lem} \label{lem:algebra}
If for each $r,s \in S$ there exists numbers $\lambda_j(r,s) \in \Q$ 
   such that
\begin{equation}\label{eq:reduction} Q_r(t) \cdot Q_s(t) = \sum_{\substack{j \in S \\1 \le j \le r+s}} \lambda_j(r,s) (1-t)^{r+s-j} Q_j(t) \,, \end{equation}
then the vector space $Z(Q,S)$ is a $\Q$-algebra, 
\end{lem}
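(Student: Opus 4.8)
The plan is to identify the multiplication on $Z(Q,S)$ with the \emph{stuffle} (harmonic) product familiar from multiple zeta values, and to show that hypothesis~\eqref{eq:reduction} is exactly the condition that keeps this product inside $Z(Q,S)$. First I would fix the abbreviation $f_s(n) := Q_s(q^n)/(1-q^n)^s$ for $s \in S$ and $n \ge 1$, so that $Z_Q(s_1,\dots,s_l) = \sum_{n_1 > \dots > n_l > 0} \prod_{j=1}^l f_{s_j}(n_j)$, and record the following reformulation of \eqref{eq:reduction}: substituting $t = q^n$ and dividing by $(1-q^n)^{r+s}$ turns the reduction formula into the pointwise identity
$$ f_r(n)\, f_s(n) = \sum_{\substack{j \in S \\ 1 \le j \le r+s}} \lambda_j(r,s)\, f_j(n), \qquad n \ge 1. $$
Thus the family $\{f_s\}_{s\in S}$ is closed, up to $\Q$-linear combinations indexed again by $S$, under pointwise multiplication. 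This is the only place the hypothesis enters.

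Next I would expand the product $Z_Q(s_1,\dots,s_l)\cdot Z_Q(t_1,\dots,t_k)$ as a double sum over pairs of strictly decreasing sequences $(n_i)$ and $(m_j)$, and partition its range according to the relative order of the two largest indices, giving the three cases $n_1 > m_1$, $n_1 < m_1$ and $n_1 = m_1$. Writing $\ast$ for the bilinear operation on words defined by $\emptyset \ast \mathbf{u} = \mathbf{u}\ast\emptyset = \mathbf{u}$ and, for letters $a,b \in S$ and words $\mathbf{u},\mathbf{v}$,
$$ (a\mathbf{u}) \ast (b\mathbf{v}) = a\big(\mathbf{u}\ast(b\mathbf{v})\big) + b\big((a\mathbf{u})\ast\mathbf{v}\big) + \sum_{\substack{j\in S\\ 1\le j\le a+b}} \lambda_j(a,b)\, j\big(\mathbf{u}\ast\mathbf{v}\big), $$
(here $a\mathbf{u}$ denotes the word obtained by prepending the letter $a$ to $\mathbf{u}$), this case distinction yields precisely $Z_Q(\mathbf{u})\cdot Z_Q(\mathbf{v}) = Z_Q(\mathbf{u}\ast\mathbf{v})$, where $Z_Q$ is extended $\Q$-linearly to formal combinations of words. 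The three summands correspond to the largest index coming from the first sequence, from the second, or from a coincidence $n_1 = m_1$, and in the coincidence case the factor $f_{s_1}(n_1)\,f_{t_1}(n_1)$ is rewritten by the pointwise identity above. All rearrangements are identities in $\Q[[q]]$, where they hold coefficientwise, so no convergence question arises.

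Finally I would conclude by induction on the total length of $\mathbf{u}$ and $\mathbf{v}$: since every letter $j$ produced by the recursion lies in $S$ and every coefficient $\lambda_j(a,b)$ is rational, $\mathbf{u}\ast\mathbf{v}$ is a $\Q$-linear combination of words in the alphabet $S$, whence $Z_Q(\mathbf{u})\cdot Z_Q(\mathbf{v}) \in Z(Q,S)$. As $Z(Q,S)$ is a $\Q$-vector space containing $1 = Z_Q(\emptyset)$, this closure under multiplication makes it a $\Q$-subalgebra of $\Q[[q]]$.

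I expect the genuinely substantive point to be the reformulation in the first paragraph: once one sees that \eqref{eq:reduction} is exactly the statement that the ``diagonal'' products $f_r f_s$ reduce to $\Q$-combinations of the $f_j$ with $j \in S$, the stuffle decomposition and the induction become routine bookkeeping. One may also note that commutativity and associativity of the induced product on the alphabet are automatic, being inherited from the (commutative, associative) pointwise multiplication of the $f_s$, so the recursion defining $\ast$ is self-consistent and the order in which the indices are merged is immaterial.
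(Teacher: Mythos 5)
Your proposal is correct and follows essentially the same route as the paper: the paper also splits the product of the two sums according to whether the largest summation indices satisfy $n_1 > m_1$, $n_1 < m_1$ or $n_1 = m_1$, and uses \eqref{eq:reduction} to reduce the diagonal term, carrying this out explicitly only for length one and asserting the general case is clear. You merely make the general induction and the underlying quasi-shuffle recursion explicit, which is a welcome but not substantively different elaboration.
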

\begin{prf}
We have to show that $Z_Q(s_1,\dots,s_l) \cdot Z_Q(r_1,\dots,r_m) \in Z(Q,S)$ and illustrate this in the $l=m=1$ case because the higher length case will be clear after this. Suppose there is a representation of the form \eqref{eq:reduction} then it is
\begin{align*}
Z_Q(r) \cdot Z_Q(s) &= \sum_{n_1>0} \frac{Q_{r}(q^{n_1})}{(1-q^{n_1})^{r}} \cdot \sum_{n_2>0} \frac{Q_{s}(q^{n_2})}{(1-q^{n_2})^{s}} \\
&= \sum_{n_1>n_2>0} \dots + \sum_{n_2 >n_1>0} \dots + \sum_{n_1=n_2=n>0} \frac{Q_{r}(q^{n})Q_{s}(q^{n})}{(1-q^{n})^{r+s}} \\
&= Z_Q(r,s)+Z_Q(s,r) + \sum_{j \in S'} \lambda_j  Z_Q(j) \,\in Z(S,Q)\,.
\end{align*}
\end{prf}

We give three examples of $q$-analogues of multiple zeta values, which are currently considered by different authors where just the second and the third will be of interest in the rest of this note. 

\begin{enumerate}[i)]

\item[0)] The polynomials $Q_s^T(t) = t^{s-1}$ are considered in \cite{yt} and sums of the form \eqref{eq:zq} with $s_1>1$ and $s_2, \dots ,s_l \geq 1$ are studied there.   
\item In \cite{bk} the authors choose  $Q^E_s(t) = \frac{1}{(s-1)!} t P_{s-1}(t)$, where the $P_s(t)$ are the eulerian polynomials defined by
\[ \frac{t P_{s-1}(t)}{(1-t)^{s}} = \sum_{d=1}^\infty d^{s-1} t^d  \]
for $s\geq 0$. With this define for all $s_1,\dots,s_l \in \N$
\[ [s_1,...,s_l] := 
\sum_{n_1 > ... > n_l > 0} \prod_{j=1}^l \frac{Q_{s_j}^E(q^{n_j})}{(1-q^{n_j})^{s_j}} \,. \]  
and set
\[ \MD = Z( \{Q^E_s(t))\}_{s},\N) \,.\]
These \emph{brackets} are generating functions for multiple divisor sums and they occur in the 
Fourier expansion of multiple Eisenstein series.

\item Okounkov chooses the following polynomials in \cite{ao}
\[  Q^O_s(t) = \begin{cases} t^{\frac{s}{2}} & s = 2,4,6,\dots  \\ 
t^{\frac{s-1}{2}} (1+t) & s=3,5,7,\dots . \end{cases} \]
and defines for $s_1,\dots,s_l \in S=\N_{>1}$
\[ \OZ(s) =  \sum_{n_1 > \dots > n_l > 0} \prod_{j=^0}^l \frac{Q^O_{s_j}(q^{n_j})}{(1-q^{n_j})^{s_j}}\,.\] 
We write for the space of the Okounkov $q$-multiple zetas
\[ \qMZV = Z(\{ Q^O_s(t) \}_s ,\N_{>1}) \,. \] 

\end{enumerate}

\begin{prop} \label{prop:quasi-shuffle} For the polynomials above we have
\begin{enumerate}[i)]
\item for $r,s \in \N$ and $Q_j^E(t) = \frac{1}{(s-1)!} t P_{s-1}(t)$
\[  Q_r^E(t) \cdot Q_s^E(t) = \sum_{j=1}^r \lambda^j_{r,s} (1-t)^{r+s-j} Q_{j}^E(t) + \sum_{j=1}^s \lambda^j_{s,r} (1-t)^{r+s-j}  Q_{j}^E(t)  + Q_{r+s}^E(t) \,, \]
where the coefficient $\lambda^j_{a,b}  \in \Q$ for $1 \leq j \leq a$ is given by
\[ \lambda^j_{a,b} = (-1)^{b-1} \binom{a+b-j-1}{a-j} \frac{B_{a+b-j}}{(a+b-j)!} \,. \]
\item for $r,s \in \N_{>1}$  it is
\[ Q^O_r(t) \cdot Q^O_s(t) =  \begin{cases} Q^O_{r+s}(X) &\,, r,s \text{ even or } r+s \text{ odd}  \\ 
4 Q^O_{r+s}(t) + (1-t)^2 Q^O_{r+s-2}(t) &\,, r,s \text{ odd}\,.\end{cases} \]
\end{enumerate}
In particular, because of Lemma \ref{lem:algebra}, the vector spaces     $\MD$  and $\qMZV$  are $\Q$-algebras.
\end{prop}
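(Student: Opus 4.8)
The plan is to establish the two product formulae separately and then read off the algebra statement from \lemref{lem:algebra}. For part i) I would pass to a generating series in an auxiliary variable $X$. Writing $f_s(t)=Q^E_s(t)/(1-t)^s=\sum_{d\ge 1}\frac{d^{s-1}}{(s-1)!}t^d$, a short computation gives the closed form
\[ \mathcal F(X):=\sum_{s\ge 1}f_s(t)\,X^{s-1}=\sum_{d\ge 1}t^d e^{dX}=\frac{te^X}{1-te^X}. \]
Setting $A=te^X$ and $B=te^Y$, so that $1+\mathcal F(X)=(1-A)^{-1}$, I would compute $\mathcal F(X)\mathcal F(Y)$ by partial fractions in the common variable $t$: from
\[ \frac1{(1-A)(1-B)}=\frac1{1-e^{Y-X}}\,\frac1{1-A}+\frac1{1-e^{X-Y}}\,\frac1{1-B} \]
together with the elementary identity $\frac1{1-e^{Y-X}}+\frac1{1-e^{X-Y}}=1$, one obtains the clean ``stuffle'' relation
\[ \mathcal F(X)\mathcal F(Y)=\frac{\mathcal F(X)}{e^{X-Y}-1}+\frac{\mathcal F(Y)}{e^{Y-X}-1}. \]

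Next I would substitute the Bernoulli expansion $\frac1{e^z-1}=\frac1z+\sum_{n\ge 1}\frac{B_n}{n!}z^{n-1}$ with $z=X-Y$ and $z=Y-X$. The two polar parts combine into the regular series $\frac{\mathcal F(X)-\mathcal F(Y)}{X-Y}$, whose coefficient of $X^{r-1}Y^{s-1}$ is exactly $f_{r+s}$; this produces the term $Q^E_{r+s}$. Extracting the coefficient of $X^{r-1}Y^{s-1}$ from the two Bernoulli series, using $(X-Y)^{n-1}=\sum_m\binom{n-1}{m}X^{n-1-m}(-Y)^m$ and re-indexing by $j=r+s-n$, yields precisely $\sum_{j=1}^r\lambda^j_{r,s}f_j$ and $\sum_{j=1}^s\lambda^j_{s,r}f_j$ with the stated binomial-times-Bernoulli coefficients. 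Multiplying the resulting identity among the $f_j$ by $(1-t)^{r+s}$ and using $f_j(1-t)^{r+s}=(1-t)^{r+s-j}Q^E_j(t)$ gives the formula of part i). This coefficient extraction is the only delicate point: one must keep track of the index ranges ($s\le n\le r+s-1$) and of signs so that the binomial $\binom{r+s-j-1}{r-j}$ and the factor $(-1)^{s-1}$ emerge correctly.

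For part ii) I would argue by a direct computation, splitting into cases according to the parities of $r$ and $s$, since $Q^O_s(t)$ is a monomial for even $s$ and a monomial times $(1+t)$ for odd $s$. If $r,s$ have opposite parity the product is already $t^{(r+s-1)/2}(1+t)=Q^O_{r+s}(t)$. If both are even it is the monomial $t^{(r+s)/2}=Q^O_{r+s}(t)$. If both are odd it equals $t^{(r+s)/2-1}(1+t)^2$, and here the only nontrivial input is the elementary identity $(1+t)^2=(1-t)^2+4t$, which rewrites the product in the basis $\{(1-t)^{r+s-j}Q^O_j(t)\}$ as a combination of $Q^O_{r+s}(t)$ and $(1-t)^2Q^O_{r+s-2}(t)$. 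A comparison of coefficients then gives the stated reduction.

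Finally, both part i) and part ii) display $Q_r(t)Q_s(t)$ in exactly the shape $\sum_j\lambda_j(r,s)(1-t)^{r+s-j}Q_j(t)$ required in \eqref{eq:reduction}; it remains only to check that every index $j$ occurring lies in the relevant support, namely $j\in\N=S$ for $\MD$ and $j\in\N_{>1}=S$ for $\qMZV$ (here one uses $r,s\ge 2$, so that the lowest index $r+s-2$ appearing in the correction term is still $>1$). The assertion that $\MD$ and $\qMZV$ are $\Q$-algebras is then immediate from \lemref{lem:algebra}.
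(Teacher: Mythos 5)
Your argument for part i) is correct and complete: the generating series $\mathcal F(X)=\sum_{s\ge1}f_s(t)X^{s-1}=te^X/(1-te^X)$, the partial--fraction decomposition in $t$, and the Bernoulli expansion of $1/(e^z-1)$ do reproduce exactly the coefficients $\lambda^j_{a,b}$, including the sign $(-1)^{s-1}$ and the binomial $\binom{r+s-j-1}{r-j}$ after the reindexing $j=r+s-n$ (the polar parts combining into $\frac{\mathcal F(X)-\mathcal F(Y)}{X-Y}$ and yielding $f_{r+s}$ is also right). Note that the paper does not carry this out at all --- it simply cites \cite{bk} for i) --- so you have supplied the argument that is only referenced there.

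The problem is in part ii). Your case analysis is correct, but it does \emph{not} ``give the stated reduction'' --- it contradicts it, and you should have said so instead of asserting agreement. What your computation actually shows is: if at least one of $r,s$ is even, then $Q^O_r(t)Q^O_s(t)=Q^O_{r+s}(t)$ (this covers the whole case ``$r+s$ odd'' \emph{and} the both-even subcase of ``$r+s$ even''), while if $r$ and $s$ are both odd --- so $r+s$ is \emph{even} --- then
\[ Q^O_r(t)\,Q^O_s(t)=t^{\frac{r+s}{2}-1}(1+t)^2=4\,Q^O_{r+s}(t)+(1-t)^2\,Q^O_{r+s-2}(t)\,, \]
with coefficient $4$, not $2$. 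The proposition as printed attaches the correction term to the case ``$r+s$ odd'' with coefficient $2$; but for instance $Q^O_2Q^O_3=t^2(1+t)=Q^O_5$, not $2Q^O_5+(1-t)^2Q^O_3$, and $Q^O_3Q^O_3=t^2(1+t)^2\neq Q^O_6$. So the correct case split is by the individual parities of $r$ and $s$, not by the parity of $r+s$, and the coefficient is $4$. None of this affects the intended conclusion: in every case the product has the shape $\sum_j\lambda_j(r,s)(1-t)^{r+s-j}Q^O_j(t)$ required by \eqref{eq:reduction} with $j\in\{r+s,\,r+s-2\}\subset\N_{>1}$, so Lemma \ref{lem:algebra} still yields that $\qMZV$ is a $\Q$-algebra. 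But a proof must not claim to confirm a formula that its own (correct) computation refutes; you should either flag the discrepancy or state the corrected formula explicitly.
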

\begin{prf} In \cite{bk} the claim i) is proven. The cases in ii) are checked easily.  

\end{prf}
\begin{cor}
$\MD^\sharp = Z(\left\{Q^E_s \right\}_{s},\N_{>1})$ is a sub algebra of $\MD$.
\end{cor}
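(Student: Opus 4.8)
The statement bundles two claims: the identification $\MD^\sharp = Z(\{Q^E_s\}_s, \N_{>1})$ and the assertion that this space is closed under multiplication. The plan is to dispose of the identification by unwinding definitions and then to derive the subalgebra property from Lemma \ref{lem:algebra}. For the first part, since $[s_1,\dots,s_l] = Z_{Q^E}(s_1,\dots,s_l)$ and $Z_{Q^E}(\emptyset)=1$, the spanning set $\{[s_1,\dots,s_l]\mid s_i>1\ \forall i \text{ or } s_1=\emptyset\}$ defining $\MD^\sharp$ is exactly the spanning set of $Z(\{Q^E_s\}_s,\N_{>1})$ appearing in \eqref{eq:zqs}; hence the two $\Q$-vector spaces coincide by construction.

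The real content is the closure under multiplication, which I would obtain by checking that the hypothesis \eqref{eq:reduction} of Lemma \ref{lem:algebra} holds for the index set $S=\N_{>1}$. Concretely, for $r,s\geq 2$ one must express $Q_r^E(t)Q_s^E(t)$ as a $\Q$-combination of terms $(1-t)^{r+s-j}Q_j^E(t)$ with $j\in\N_{>1}$ and $1\le j\le r+s$. Proposition \ref{prop:quasi-shuffle} i) already supplies such an expansion, with $j$ running through $\{1,\dots,r\}$, $\{1,\dots,s\}$ and $j=r+s$. Every term with $j\geq 2$ is admissible, and so is the $Q^E_{r+s}$ term since $r+s\geq 4$; the entire question therefore reduces to showing that the coefficient of the single inadmissible term $(1-t)^{r+s-1}Q_1^E(t)$, namely $\lambda^1_{r,s}+\lambda^1_{s,r}$, vanishes for all $r,s>1$.

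This vanishing is the one point that requires care, and it is exactly where the restriction $s_i>1$ is used: were weight-one entries allowed, the $Q_1^E$ term would survive and the reduction would leave $S$. To prove it I would substitute $j=1$ into the formula for $\lambda^j_{a,b}$, use the symmetry $\binom{r+s-2}{r-1}=\binom{r+s-2}{s-1}$, and factor the sum as $\binom{r+s-2}{r-1}\frac{B_{r+s-1}}{(r+s-1)!}\big((-1)^{s-1}+(-1)^{r-1}\big)$. A parity dichotomy then finishes the argument: if $r+s$ is odd the exponents $r-1$ and $s-1$ have opposite parity, so the bracket is zero; if $r+s$ is even then $r+s-1\geq 3$ is odd, so the Bernoulli number $B_{r+s-1}$ is zero. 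In either case the coefficient vanishes, \eqref{eq:reduction} holds within $\N_{>1}$, and Lemma \ref{lem:algebra} yields that $\MD^\sharp$ is a subalgebra of $\MD$.
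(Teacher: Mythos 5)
Your proof is correct and follows essentially the same route as the paper: reduce via Lemma \ref{lem:algebra} and Proposition \ref{prop:quasi-shuffle}~i) to the vanishing of the coefficient $\lambda^1_{r,s}+\lambda^1_{s,r}$ of the single inadmissible term, then factor it using the symmetry of the binomial coefficient and conclude by the same parity dichotomy (opposite parities kill the sign bracket, equal parities make $r+s-1\geq 3$ odd so $B_{r+s-1}=0$). You merely spell out more explicitly the definitional identification of the two spaces and why only the $j=1$ term is problematic, which the paper leaves implicit.
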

\begin{proof} Using Proposition \ref{prop:quasi-shuffle} it is easy to see that
it suffices to show that      
\[ \lambda^1_{a,b}+\lambda^1_{b,a} = \left( (-1)^{a-1} + (-1)^{b-1} \right) \binom{a+b-2}{a-1} \frac{B_{a+b-1}}{(a+b-1)!} \, \]
vanishes for $a,b > 1$.
This term clearly vanishes when $a$ and $b$ have different parity. In the other case $a+b-1$ is odd and greater than $1$, as $a,b > 1$. It is well known that in this case $B_{a+b-1}=0$, from which we deduce that   $\lambda^1_{a,b}+\lambda^1_{b,a}=0$.
\end{proof}

\begin{thm} 
Let $Z(Q,\N_{>1})$  be any family of q-analogues of multiple zeta values as in \eqref{eq:zqs}, where 
each $Q_s(t) \in Q$ is a polynomial with degree at most $s-1$, then 
 \[ Z(Q,\N_{>1}) =   \MD^\sharp \,. \]
  and therefore all such  families of q-analogues of multiple zeta values  are $\Q$-sub algebras of $\MD$. 
  In particular  $\qMZV =  \MD^\sharp$.
\end{thm}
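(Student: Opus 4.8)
The plan is to linearize each nested sum so that both $Z(Q,\N_{>1})$ and $\MD^\sharp$ become spanned by the very same bracket objects $[t_1,\dots,t_l]$, and then to read off the two inclusions from a unitriangular change of basis. The starting observation is that for any polynomial $Q$ with $Q(0)=0$ and $\deg Q\le s-1$ there is a unique polynomial $p_{Q,s}(x)\in\Q[x]$, of degree at most $s-1$, with
\[ \frac{Q(t)}{(1-t)^s}=\sum_{d\ge 1}p_{Q,s}(d)\,t^d , \]
obtained by expanding $\frac{t^a}{(1-t)^s}=\sum_{d\ge 1}\binom{d-a+s-1}{s-1}t^d$ and using linearity in $Q=\sum_{a=1}^{s-1} c_a t^a$. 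Substituting $t=q^n$ and summing over $n>0$ turns the single sum into a divisor sum, $Z_Q(s)=\sum_{N\ge1}\big(\sum_{d\mid N}p_{Q,s}(d)\big)q^N$, and the defining identity of the Eulerian polynomials gives $p_{Q^E_t,t}(x)=x^{t-1}/(t-1)!$, so that $[t]=\sum_{N}\big(\sum_{d\mid N}\frac{d^{t-1}}{(t-1)!}\big)q^N$.

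The two facts I would extract from this computation are the following. First, $p_{Q,s}(0)=\sum_a c_a\binom{s-1-a}{s-1}=0$, because for $1\le a\le s-1$ each binomial $\binom{s-1-a}{s-1}$ vanishes; this is exactly where the hypothesis $\deg Q\le s-1$, together with $Q(0)=0$, is used. Consequently $p_{Q,s}$ lies in the span of $x,x^2,\dots,x^{s-1}$, i.e. of the polynomials $x^{t-1}/(t-1)!$ with $2\le t\le s$, and no constant term (which would correspond to the forbidden index $[1]$) ever appears. Second, the leading coefficient of $p_{Q,s}$ is $\frac{1}{(s-1)!}\sum_a c_a=\frac{Q(1)}{(s-1)!}$, which is nonzero by the standing hypothesis $Q(1)\ne0$; this is the diagonal entry that will make the change of basis invertible.

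With these in hand the general case is a formal consequence. Expanding each factor in \eqref{eq:zq} via $p_{Q_{s_j},s_j}(x)=\sum_{t_j=2}^{s_j}\beta_{j,t_j}\frac{x^{t_j-1}}{(t_j-1)!}$ and using that the nested sum $\sum_{n_1>\dots>n_l>0}$ is multilinear in the $l$ inner series, I would obtain
\[ Z_Q(s_1,\dots,s_l)=\sum_{t_1=2}^{s_1}\cdots\sum_{t_l=2}^{s_l}\Big(\prod_{j=1}^l\beta_{j,t_j}\Big)[t_1,\dots,t_l], \]
with every $t_j\ge2$; this immediately gives $Z(Q,\N_{>1})\subseteq\MD^\sharp$. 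For the reverse inclusion I would note that the coefficient of the top bracket $[s_1,\dots,s_l]$ equals $\prod_j\beta_{j,s_j}=\prod_jQ_{s_j}(1)\ne0$, while all remaining $[t_1,\dots,t_l]$ have $t_j\le s_j$ and strictly smaller weight $t_1+\dots+t_l$. Hence, fixing the length $l$ and inducting on the weight, each $[s_1,\dots,s_l]$ with all $s_j\ge2$ is recovered as a rational combination of $Z_Q(s_1,\dots,s_l)$ and brackets of smaller weight already known to lie in $Z(Q,\N_{>1})$; the base case $s_1=\dots=s_l=2$ has no lower terms. Finally, $\qMZV=Z(\{Q^O_s\}_s,\N_{>1})$ is the special case $\deg Q^O_s\le s-1$, giving $\qMZV=\MD^\sharp$.

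I expect the only genuine obstacle to be the bookkeeping around the entry-one condition: one must verify that the degree hypothesis forces $p_{Q,s}(0)=0$, so that the expansion never escapes $\MD^\sharp$ into the larger algebra $\MD$, where a depth-one index $1$ is permitted. Everything else — the binomial expansion, the multilinearity of the nested sum, and the triangular induction — is routine once the coefficient polynomial $p_{Q,s}$ and its two invariants, the value at $0$ and the leading coefficient, have been isolated.
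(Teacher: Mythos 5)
Your argument is correct, and it reaches the same underlying reduction as the paper --- namely that everything comes down to a change of basis in the $(s-1)$-dimensional space of polynomials of degree at most $s-1$ with no constant term --- but it executes that reduction differently. The paper stays on the "polynomial in $t$" side: it observes that the $s-1$ polynomials $(1-t)^{s-j}Q^E_j(t)$, $2\le j\le s$, are linearly independent because they vanish to pairwise distinct orders at $t=1$ (this is where $Q^E_j(1)\neq 0$ enters), hence form a basis, so any admissible $Q_s$ is a combination of them; the same dimension count applied to the family $Q$ gives the reverse inclusion. You instead pass to the coefficient polynomials $p_{Q,s}(d)$ of the expansion $\frac{Q(t)}{(1-t)^s}=\sum_{d\ge 1}p_{Q,s}(d)t^d$ and work in the monomial basis $x^{t-1}/(t-1)!$, proving the two key invariants ($p_{Q,s}(0)=0$ and leading coefficient $Q(1)/(s-1)!\neq 0$) by the binomial expansion $\frac{t^a}{(1-t)^s}=\sum_d\binom{d-a+s-1}{s-1}t^d$. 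This is essentially the content of the paper's subsequent proposition on the numbers $b^k_{i,j}$, so your proof folds that explicit computation into the theorem itself. What your route buys is twofold: it makes completely explicit why the entry $1$ never appears (the vanishing of the constant term of $p_{Q,s}$, which the paper leaves implicit in its dimension count), and it gives the reverse inclusion by an honest weight-triangular induction with nonzero diagonal entries $\prod_j Q_{s_j}(1)$, where the paper's one-line "it is sufficient to show" is terser and leaves the symmetry of the argument to the reader. The paper's route is shorter and basis-free. Both are complete; the only point you should make sure to state cleanly is the multilinearity step that lets you expand each factor of the nested sum independently, which you do invoke and which is immediate for formal power series.
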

\begin{prf} To proof the first equality it is sufficient to show that for each $s>1$ there are numbers $\lambda_j \in \Q$ with $2 \leq j \leq s$ such that 
\[ \frac{Q_s(t)}{(1-t)^s} = \sum_{j=2}^{s} \lambda_j \frac{Q^E_j(t)}{(1-t)^j} \,. \]
The space of polynomials with at most degree $s-1$ and no constant term has dimension $s-1$. For  $2 \leq j \leq s$ the polynomials $(1-t)^{s-j} Q'_j(t)$ are all linear independent since $Q'(1)\neq 1$ and therefore such $\lambda_j$ exist. The second statement follows directly  from the definition of $\qMZV$. 
\end{prf}

The following proposition allows one to write an arbitrary element in $Z(Q,\N_{>1})$ as an linear combination of $[s_1,\dots,s_l] \in \MD^\sharp $.
\begin{prop} Assume $k\geq 2$.  For
$1 \leq i,j \leq k-1$ define the numbers $b^{k}_{i,j} \in \Q$ by
\[\sum_{j=1}^{k-1} \frac{b^k_{i,j}}{j!} t^j :=\binom{t+k-1-i}{k-1}
\,.\]
With this it is for $1 \leq i \leq k-1$ and $Q^E_{j}(t) = \frac{1}{(j-1)!} t P_j(t)$ 
\[ t^i = \sum_{j=2}^k b^k_{i,j-1} (1-t)^{k-j} Q^E_{j}(t) \,.\] 
\end{prop}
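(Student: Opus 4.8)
The plan is to clear the denominator $(1-t)^k$ and pass to the associated $t$-series, where the defining relation for the numbers $b^k_{i,j}$ becomes transparent. Dividing the claimed identity by $(1-t)^k$ reduces it to
\[ \frac{t^i}{(1-t)^k} = \sum_{j=2}^k b^k_{i,j-1}\, \frac{Q^E_j(t)}{(1-t)^j}\,. \]
On the right I would invoke the generating-series characterization of the eulerian polynomials (the one used to define $Q^E_s$ in the excerpt), which gives $\frac{Q^E_j(t)}{(1-t)^j} = \frac{1}{(j-1)!}\sum_{d\geq 1} d^{j-1} t^d$. I prefer to work from this generating function rather than the closed form for $Q^E_j$, since it is unambiguous. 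Substituting and interchanging the two summations, the coefficient of $t^d$ on the right becomes $\sum_{j=2}^k \frac{b^k_{i,j-1}}{(j-1)!}\, d^{j-1}$.

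The next step is to recognize this coefficient. After reindexing $m=j-1$, the inner sum is exactly $\sum_{m=1}^{k-1} \frac{b^k_{i,m}}{m!}\, d^m$, which by the definition of the $b^k_{i,m}$ is the polynomial $\binom{t+k-1-i}{k-1}$ evaluated at $t=d$, i.e. $\binom{d+k-1-i}{k-1}$. Hence the right-hand side equals $\sum_{d\geq 1}\binom{d+k-1-i}{k-1}\,t^d$. On the left I would expand $\frac{t^i}{(1-t)^k} = \sum_{n\geq 0}\binom{n+k-1}{k-1}\,t^{n+i}$ and reindex by $d=n+i$ to obtain $\sum_{d\geq i}\binom{d+k-1-i}{k-1}\,t^d$. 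Comparing coefficients then reduces the whole claim to a single bookkeeping point.

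The main obstacle, and really the only subtlety, is matching the ranges of summation: the left sum begins at $d=i$ whereas the right begins at $d=1$. I would resolve this by checking that the additional terms vanish. For $1\leq d\leq i-1$ one has $0\leq d+k-1-i\leq k-2$, so $\binom{d+k-1-i}{k-1}=0$, because the polynomial $\binom{x}{k-1}=\frac{x(x-1)\cdots(x-k+2)}{(k-1)!}$ vanishes at the integers $0,1,\dots,k-2$. Thus the left-hand series may be extended down to $d=1$ without change, the two series agree term by term, and the identity follows. The same vanishing, applied at $d=0$ via $\binom{k-1-i}{k-1}=0$ for $1\le i\le k-1$, is exactly what makes the defining expansion of $b^k_{i,j}$ have no constant term, which is consistent with both sums starting at their respective lowest indices.
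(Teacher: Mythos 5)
Your proof is correct and follows essentially the same route as the paper's: divide by $(1-t)^k$, expand both sides as power series in $t$ via the Eulerian generating function and the binomial series $\frac{1}{(1-t)^k}=\sum_{n\geq 0}\binom{n+k-1}{k-1}t^n$, and compare coefficients. The only difference is that you spell out the vanishing of $\binom{d+k-1-i}{k-1}$ for $1\leq d\leq i-1$ needed to match the summation ranges, a bookkeeping point the paper's proof passes over with ``the claim now follows directly.''
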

\begin{prf}
We want to show that 
\[ \frac{t^i}{(1-t)^k} = \sum_{j=1}^{k-1}  \frac{b^k_{i,j}}{j!} \frac{t P_{j}(t)}{(1-t)^{j+1}} \] 
By the definition of the Eulerian Polynomials it is
\begin{align*}
\sum_{j=1}^{k-1} \frac{ b^k_{i,j}}{j!} \frac{t P_{j}(t)}{(1-t)^{j+1}} &= \sum_{j=1}^{k-1}  \frac{b^k_{i,j}}{j!} \sum_{d>0} d^j t^d \\
&=  \sum_{d>0} \left( \sum_{j=1}^{k-1} \frac{ b^k_{i,j}}{j} d^j \right) t^d \\
&= \sum_{d>0} \binom{d-i+k-1}{k-1} t^d
\end{align*}
The claim now follows directly from the easy to prove formula
\[ \frac{1}{(1-t)^k} = \sum_{n\geq 0} \binom{n + k-1}{k-1} t^n \,. \]
\end{prf}
We give some examples how to write elements in $\qMZV$ as linear combinations of elements in $\MD$. From the proposition we deduce for the length one case for all $k>0$ 
\begin{align*}
\OZ(2k) = \sum_{j=2}^{2k} b_{k,j-1}^{2k} [j] \,\quad \text{ and } \quad \OZ(2k+1) = \sum_{j=2}^{2k+1} \left( b_{k,j-1}^{2k+1} + b_{k+1,j-1}^{2k+1} \right) [j] \,.
\end{align*}
Clearly this also suffices to give linear combinations in higher length. 
\begin{ex} We give some examples
\begin{align*}
\OZ(2) &= [2] \,, \qquad \OZ(3) = 2 [3] \,,\\
\OZ(4) &= [4] - \frac{1}{6} [2] \,, \qquad \OZ(5) = 2 [5] - \frac{1}{6} [3] \,, \\
\OZ(6) &= [6] - \frac{1}{4} [4] + \frac{1}{30} [2] \,, \qquad \OZ(7) = 2[7] - \frac{1}{3} [5] + \frac{1}{45} [3] \,,\\
\OZ(2,2) &= [2,2] \,, \qquad \OZ(2,4) = [2,4] - \frac{1}{6} [2,2]  \,. \\
\end{align*}
\end{ex}

The $q$-expansion of  modular forms are well known to give rise to $q$-analogues of Riemann zeta values. 
Let us denote by $M_\Q = \Q[G_4,G_6]$ and $\widetilde{M}_Q = \Q[G_2,G_4,G_6]$  the ring of modular and quasi-modular forms, where the Eisenstein series $G_2$, $G_4$ and $G_6$ are given by
\[ G_2 = -\frac{1}{24} + [2] \,,\quad G_4 = \frac{1}{1440} + [4] \,, \quad G_6 = -\frac{1}{60480} + [6] \,.\]

We clearly have the following inclusions of $\Q$-algebras 
\[ M_\Q \subset \widetilde{M}_\Q  \subset \qMZV \subset \MD \,. \]
where the second inclusion follows from 
\begin{align*}
G_2 &= -\frac{1}{24} + Z(2) \,,\\
G_4 &=  \frac{1}{1440} + Z(2) + \frac{1}{6} Z(4) \, ,\\
G_6 &=  -\frac{1}{60480} + Z(6) + \frac{1}{4} Z(4) + \frac{1}{120} Z(2)  \,.
\end{align*}
In the theory of modular forms the operator $\mathrm{d}:=q \frac{d}{d q}$ plays an important role   and it is a well known fact that $\widetilde{M}_Q$ is closed under $\dif$. 

\begin{prop}
The subalgebra of the quasi-modular forms $\widetilde{M}_\Q\subset \MD$ is graded by the weight and filtered by the length and its Hilbert series satisfies
\begin{align}\label{HgrwlM}
  \sum_{k,l}  \dim_\Q \grwl_{k,l} \widetilde{M}_\Q   \, x^k t^l =  
\Big(   1 + \frac{x^4}{1-x^2} \, t + \frac{x^{12}}{(1-x^4)(1-x^6)} \, t^2 \Big)  
\Big( \frac{1}{1-x^2\, t} \Big).
\end{align}
\end{prop}
\begin{proof} Since $\widetilde{M}_Q = \Q[G_2,G_4,G_6]$, 
the formula 
\eqref{HgrwlM} follows from the fact that $M_\Q$ is spanned as a vector space by products of Eisenstein series of the form $G_a G_b$. 
\end{proof}

In \cite{bk} the authors showed the following 
\begin{thm} \label{thm:derivative} The operator $\dif$ is a derivation on $\MD$ that is compatible with the filtrations on $\MD$ given by the weight and the length. 
\end{thm}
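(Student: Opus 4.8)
The plan is to disentangle the two assertions packaged in the statement: that $\dif$ satisfies the Leibniz rule on $\MD$, and that $\MD$ is stable under $\dif$ with the appropriate control on weight and length. The Leibniz rule comes for free, since $\dif = q\frac{d}{dq}$ is a derivation of the entire ring $\Q[[q]]$ and $\MD\subseteq\Q[[q]]$ is a subalgebra by Proposition \ref{prop:quasi-shuffle}; hence $\dif(fg)=\dif(f)g+f\dif(g)$ holds for all $f,g\in\MD$ as soon as $\dif(f),\dif(g)\in\MD$. The whole substance is therefore to prove the inclusion $\dif(\MD)\subseteq\MD$ together with the filtration bounds $\dif(\filw_k)\subseteq\filw_{k+2}$ and $\dif(\fille_l)\subseteq\fille_{l+1}$, where $\filw$ and $\fille$ denote the weight- and length-filtrations. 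As $\dif$ is $\Q$-linear and the brackets span $\MD$, it suffices to treat a single bracket $[s_1,\dots,s_l]$.

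First I would expand
\[ [s_1,\dots,s_l]=\frac{1}{(s_1-1)!\cdots(s_l-1)!}\sum_{\substack{n_1>\dots>n_l>0\\ d_1,\dots,d_l>0}} d_1^{s_1-1}\cdots d_l^{s_l-1}\,q^{n_1 d_1+\dots+n_l d_l}, \]
using $\frac{Q^E_{s}(q^{n})}{(1-q^{n})^{s}}=\frac{1}{(s-1)!}\sum_{d>0}d^{s-1}q^{nd}$. Since $\dif$ multiplies the monomial $q^{n_1d_1+\dots+n_ld_l}$ by $n_1d_1+\dots+n_ld_l$, splitting according to which index contributes gives
\[ \dif[s_1,\dots,s_l]=\sum_{i=1}^{l}\frac{1}{\prod_{j}(s_j-1)!}\sum_{\substack{n_1>\dots>n_l>0\\ d_1,\dots,d_l>0}} n_i\,d_i^{s_i}\prod_{j\neq i}d_j^{s_j-1}\,q^{n_1d_1+\dots+n_ld_l}. \]
Here the factor $d_i$ is harmless: $d_i^{s_i}=d_i\cdot d_i^{s_i-1}$ simply promotes the entry $s_i$ to $s_i+1$ after restoring factorials, reflecting the identity $x\frac{d}{dx}\frac{Q^E_s(x)}{(1-x)^s}=s\,\frac{Q^E_{s+1}(x)}{(1-x)^{s+1}}$. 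The genuine difficulty is the surviving factor $n_i$, the outer summation variable, which carries no bracket meaning on its own.

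To eliminate $n_i$ I would invoke the explicit partial-fraction identity of \cite{bk} for the generating series of the brackets, which rewrites the $n_i$-weighted multiple divisor sum as a $\Q$-linear combination of honest brackets. At the arithmetic level this is an identity between the relevant divisor sums rather than a monomial-by-monomial matching, and it is where the real work lies. As a calibration of what to expect I would first carry out the length-one case by hand: $\dif[2]$ can be computed through the Ramanujan relation for $G_2$, giving
\[ \dif[2]=3[4]-4[2,2]+\frac{1}{2}[2]. \]
This already exhibits the two structural features of the general formula. The top part $3[4]-4[2,2]$ sits in weight $4=2+2$ and length $2=1+1$, while the term $\frac{1}{2}[2]$ is a genuine lower-weight correction; thus one cannot expect $\dif$ to raise the weight by exactly $2$, only to respect the weight filtration, which is consistent with weight being merely a filtration on $\MD$ (products already lower the weight via Proposition \ref{prop:quasi-shuffle}).

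Carrying this out for every $i$ and every bracket, the partial-fraction identity expresses $\dif[s_1,\dots,s_l]$ as a combination of brackets of weight at most $s_1+\dots+s_l+2$ and length at most $l+1$, which yields simultaneously $\dif(\MD)\subseteq\MD$, $\dif(\filw_k)\subseteq\filw_{k+2}$ and $\dif(\fille_l)\subseteq\fille_{l+1}$; together with the automatic Leibniz rule this is exactly the claim that $\dif$ is a derivation compatible with both filtrations. I expect the main obstacle to be precisely the partial-fraction step: producing and proving the identity that trades the outer factor $n_i$ for genuine bracket entries, tracking how the freed variable couples to its neighbours under the ordering $n_{i-1}>n_i>n_{i+1}$, and verifying that the length increases by at most one and the weight stays within $\filw_{k+2}$. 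Once this identity is in place, the factorials, binomial coefficients and the benign $d_i$-contributions are bookkeeping.
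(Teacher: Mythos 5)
First, a point of comparison: the paper itself contains no proof of Theorem \ref{thm:derivative} --- it is imported verbatim from \cite{bk} ("In \cite{bk} the authors showed the following") --- so the only benchmark is the argument in that reference. Judged on its own terms, your reduction is the right skeleton. The Leibniz rule is indeed inherited from $\Q[[q]]$ once stability is known, the entire content is the inclusion $\dif(\MD)\subseteq\MD$ together with the bounds $\dif(\filw_k)\subseteq\filw_{k+2}$ and $\dif(\fille_l)\subseteq\fille_{l+1}$, and after expanding a bracket as a multiple divisor sum the factor $d_i$ produced by $\dif q^{\sum n_jd_j}=(\sum n_jd_j)\,q^{\sum n_jd_j}$ does merely promote $s_i$ to $s_i+1$, leaving the surviving factor $n_i$ as the sole obstruction. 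Your length-one calibration $\dif[2]=3[4]-4[2,2]+\tfrac12[2]$ is correct (it follows from Ramanujan's identity for $G_2$ together with $[2]\cdot[2]=2[2,2]+[4]-\tfrac16[2]$) and genuinely exhibits the weight-$(k{+}2)$, length-$(l{+}1)$ leading part plus a lower-weight tail.

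The gap is that the step carrying the entire weight of the theorem is left as a black box. You ``invoke the explicit partial-fraction identity of \cite{bk}'' that trades the factor $n_i$ for honest bracket entries, but you neither state this identity nor prove it, and you do not verify that whatever it produces lands in $\filw_{k+2}\cap\fille_{l+1}$ --- yet those bounds are precisely the assertion being proved, so they cannot be attributed to an unspecified identity. Everything before this point (splitting the sum over $i$, absorbing $d_i$, the Leibniz rule) is routine, and everything after it is a corollary of the identity, so as written the proposal is an accurate map of where the proof must go rather than a proof. To close it you would have to actually rewrite $\sum_{n_1>\dots>n_l>0}n_i\prod_j(\cdots)$ as a $\Q$-linear combination of brackets --- for instance by telescoping $n_i$ through the gaps $n_j-n_{j+1}$ and recognising each resulting sum as a bracket with one additional entry --- and then track weight and length through that combinatorics. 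That is exactly the nontrivial content of the corresponding results in \cite{bk} (compare the explicit length-one formula for $\dif[k-2]$ quoted later in this paper), which the present note simply cites.
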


In \cite{ao} the following conjecture is stated by Okounkov

\begin{conj}\label{Ok_derconj}
The operator $\dif$ is a derivation on $\qMZV$.
\end{conj}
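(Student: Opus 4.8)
The plan is to reduce the conjecture to a closure statement and then attack that closure statement with the explicit derivative formula of \cite{bk}. By the Theorem we have $\qMZV = \MD^\sharp$, and by \thmref{thm:derivative} the operator $\dif$ is already a derivation on the ambient algebra $\MD$. Since $\MD^\sharp$ is a subalgebra of $\MD$, the Leibniz rule $\dif(fg) = \dif(f)\,g + f\,\dif(g)$ holds automatically for all $f,g \in \qMZV$. Hence the only thing left to prove is that $\qMZV$ is \emph{stable} under $\dif$, i.e. $\dif(\MD^\sharp) \subseteq \MD^\sharp$: a derivation on $\MD$ that preserves a subalgebra restricts to a derivation there. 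As $\MD^\sharp$ is spanned over $\Q$ by the admissible brackets $[s_1,\dots,s_l]$ with all $s_i>1$, and $\dif$ is $\Q$-linear, it suffices to show $\dif[s_1,\dots,s_l] \in \MD^\sharp$ for every such admissible index set.

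First I would write out $\dif[s_1,\dots,s_l]$ using the explicit derivative formula proved in \cite{bk}. Recall that $\dif$ raises the weight by $2$ and, by \thmref{thm:derivative}, is compatible with the length filtration, so each bracket occurring on the right-hand side has weight $k+2$ and bounded length; products of brackets may also occur, but these cause no trouble since $\MD^\sharp$ is an algebra. The length-one instance of the desired stability is exactly statement iii) of the Theorem in the introduction: the explicit formula already shows $\dif\,\OZ(k) \in \qMZV$ for all $k\ge 2$. The task is then to promote this to arbitrary length, by checking that after inserting an admissible index set into the \cite{bk} formula every bracket appearing is again admissible.

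The main obstacle is precisely the control of the boundary terms. Because $\dif$ raises the weight by $2$, a forbidden entry $s_i=1$ cannot be excluded on weight grounds alone: a term such as $[1,s+1]$ carries the same large weight as $[s+2]$, so index-$1$ contributions can in principle survive in the higher-length part of the derivative formula. The heart of a proof must therefore be a cancellation argument showing that all such index-$1$ contributions either never arise or cancel among themselves. I would group the terms of the derivative formula according to the position and the mechanism by which an index is lowered to $1$, and try to match them in cancelling pairs, in the same spirit as the identity $\lambda^1_{a,b}+\lambda^1_{b,a}=0$ used in the Corollary, where the vanishing ultimately came from the vanishing of odd Bernoulli numbers. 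Establishing this cancellation uniformly in all lengths is the genuinely hard step, and it is exactly the reason the statement remains only a conjecture: statement iii) secures the length-one case, but no such clean cancellation is presently known in general.
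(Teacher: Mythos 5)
The statement you were asked to prove is labelled as a \emph{conjecture} in the paper, and the paper does not prove it: it only offers evidence, namely the Proposition that $\dif\OZ(k)\in\qMZV$ for all $k\geq 2$, proved via the explicit length-one derivative formula from \cite{bk}. Your write-up is therefore not in competition with a proof in the paper, and to your credit you say explicitly that you have not closed the argument. Your reduction is correct and worth stating: since $\qMZV=\MD^\sharp$ is a subalgebra of $\MD$ and $\dif$ is a derivation on $\MD$ by \thmref{thm:derivative}, the Leibniz rule on $\qMZV$ is automatic, so the conjecture is exactly equivalent to the stability statement $\dif(\MD^\sharp)\subseteq\MD^\sharp$, and by $\Q$-linearity to $\dif[s_1,\dots,s_l]\in\MD^\sharp$ for all admissible indices. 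This is the same framing the paper implicitly adopts (its evidence is precisely the length-one instance of this stability, plus the Leibniz-rule consequences for symmetrized higher-length expressions such as $\dif(\OZ(k_1,k_2)+\OZ(k_2,k_1))$).

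The genuine gap is the one you name: controlling the brackets with an entry equal to $1$ that appear in $\dif[s_1,\dots,s_l]$ for $l\geq 2$. Two concrete points where your plan is thinner than you suggest. First, the only explicit derivative formula quoted in this paper (from \cite{bk}, Theorem 3.5) is for $\dif[k]$ in length one; you would need a general-length analogue from \cite{bk} even to set up the computation, and such a formula is not recorded here. Second, your proposed cancellation mechanism is modelled on $\lambda^1_{a,b}+\lambda^1_{b,a}=0$, which came from the vanishing of odd Bernoulli numbers in the \emph{quasi-shuffle product}; there is no reason the coefficients of the index-$1$ terms in the \emph{derivative} formula obey the same arithmetic, and indeed the paper's own length-one argument works differently (it exploits the freedom in choosing the decomposition $k=s_1+s_2$ to eliminate the single bad term $[k-1,1]$ by taking a suitable linear combination of two instances of the formula). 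Any honest attempt at higher length would have to find the analogous degrees of freedom, and none are exhibited. So your text should be read as a correct reformulation of the conjecture plus a proof of nothing beyond what the paper already establishes.
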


For the derivative of a length one generating series of multiple divisor sums we have several identies. These
will be used to make the following result which gives some evidence for the conjecture above. 

\begin{prop}
It is $\dif \OZ(k) \in \qMZV$ for all $k\geq 2$. 
\end{prop}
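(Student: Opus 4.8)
The plan is to reduce the statement to the length-one case and then to analyse $\dif[k]$ by an explicit formula. Since $\dif$ is a derivation, and in particular $\Q$-linear, on $\MD$ by Theorem~\ref{thm:derivative}, and since the expansions $\OZ(2k)=\sum_{j=2}^{2k}b^{2k}_{k,j-1}[j]$ and $\OZ(2k+1)=\sum_{j=2}^{2k+1}\big(b^{2k+1}_{k,j-1}+b^{2k+1}_{k+1,j-1}\big)[j]$ obtained above write every $\OZ(k)$ as a $\Q$-linear combination of brackets $[j]$ with $j\ge 2$, it suffices to prove
\[ \dif[k]\in\MD^\sharp \qquad\text{for all } k\ge 2. \]
As $\qMZV=\MD^\sharp$, the proposition then follows at once.

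The starting point is the $q$-expansion of a length-one bracket. From the definition of the brackets and of the Eulerian polynomials one reads off $[k]=\frac{1}{(k-1)!}\sum_{n>0}\sigma_{k-1}(n)\,q^n$, where $\sigma_{k-1}(n)=\sum_{d\mid n}d^{k-1}$. Applying $\dif=q\frac{d}{dq}$ termwise yields the length-one derivative identity
\[ \dif[k]=\frac{1}{(k-1)!}\sum_{n>0}n\,\sigma_{k-1}(n)\,q^n, \]
which is the series I must rewrite as a combination of brackets all of whose entries are at least $2$.

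For even $k$ this is immediate and uses only facts recalled above. The bracket $[k]$ differs from an element of $\widetilde{M}_Q$ by a rational constant $c_k$, because $\sum_{n>0}\sigma_{k-1}(n)q^n$ is, up to an additive constant, the weight-$k$ Eisenstein series, which is quasimodular. Since $\dif$ annihilates constants and $\widetilde{M}_Q$ is closed under $\dif$, it follows that $\dif[k]=\dif([k]-c_k)\in\widetilde{M}_Q\subset\qMZV=\MD^\sharp$.

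The remaining and genuinely harder case is $k\ge 3$ odd: now $\dif[k]$ has odd weight $k+2$, is not quasimodular, and its expression in brackets necessarily involves length-two brackets. Here I would invoke the explicit formula of \cite{bk} for the depth-one derivative, which writes $\dif[k]$ as a $\Q$-linear combination of the weight-$(k+2)$ brackets $[k+2]$ and $[a,b]$ with $a+b=k+2$. The whole point, and the step I expect to be the main obstacle, is to show that the only brackets with an entry equal to $1$, namely $[k+1,1]$ and $[1,k+1]$, appear with coefficient zero. I expect this vanishing to follow from a parity argument on the Bernoulli numbers occurring in those coefficients, in the same way that the identity $B_{a+b-1}=0$ for odd $a+b-1>1$ forced the boundary terms to cancel in the Corollary following Proposition~\ref{prop:quasi-shuffle}. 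Once these boundary coefficients are seen to vanish for every $k\ge 2$, each bracket occurring in $\dif[k]$ has all entries $\ge 2$, hence $\dif[k]\in\MD^\sharp$, which completes the argument.
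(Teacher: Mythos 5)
Your reduction to showing $\dif[k]\in\MD^\sharp$ for all $k\ge 2$ is exactly the right first step, and your treatment of even $k$ via $[k]=G_k+\text{const}$ and the closure of $\widetilde{M}_Q$ under $\dif$ is a valid (and rather clean) alternative to what the paper does in that case. The problem is the odd case, which is the whole content of the proposition, and there your plan rests on an expectation that is false. You hope that in the explicit formula of \cite{bk} for the depth-one derivative the coefficient of the boundary bracket $[\,\cdot\,,1]$ vanishes by a Bernoulli-parity argument, analogous to the vanishing of $\lambda^1_{a,b}+\lambda^1_{b,a}$. It does not vanish. The formula in question,
\begin{align*}
\binom{k-2}{s_1-1} \frac{\dif[k-2]}{k-2} &= [s_1]\cdot[s_2] - [s_1,s_2]-[s_2,s_1] +\binom{k-2}{s_1-1} [k-1] \\ &\quad - \sum_{\substack{a+b=k+2 \\ a > s_1}} \left( \binom{a-1}{s_1-1}+\binom{a-1}{s_2-1} - \delta_{a,s_2} \right) [a,b]\,,
\end{align*}
holds for \emph{every} splitting $k=s_1+s_2$, and for the choice $s_1=1$, $s_2=k-1$ the offending bracket $[k-1,1]$ survives with coefficient $-(k-2)$ (after normalising the left-hand side to $\dif[k-2]$), while for $s_1=2$, $s_2=k-2$ it survives with coefficient $-2(k-2)$. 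No single instance of the formula lands in $\MD^\sharp$.

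The missing idea is precisely to exploit this freedom in the splitting: taking a suitable $\Q$-linear combination of the two instances above (both of which express the same quantity $\dif[k-2]$) cancels the $[k-1,1]$ term, and everything that remains lies in $\MD^\sharp$ because $\MD^\sharp$ is a subalgebra (so the product term $[s_1]\cdot[s_2]$ is harmless for the splitting with $s_1,s_2\ge 2$, and for the splitting $s_1=1$ the combination $[1]\cdot[k-1]-[1,k-1]-[k-1,1]$ reduces by the quasi-shuffle relation to brackets without entries equal to $1$). Without this two-splittings cancellation your argument does not go through for odd $k$; with it, it coincides with the paper's proof. As a side remark, your claim that only weight-$(k+2)$ brackets occur in $\dif[k]$ is also slightly off: the formula contains the lower-weight term $[k-1]$, since the weight on $\MD$ is only a filtration, though this does not affect membership in $\MD^\sharp$.
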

\begin{prf}
In \cite{bk} Theorem 3.5 the authors prove the following representation of the derivative $\dif[k-2]$ 
\begin{align*}
\binom{k-2}{s_1-1} &\frac{\dif[k-2]}{k-2} = [s_1]\cdot[s_2] - [s_1,s_2]-[s_2,s_1]  \\
&+\binom{k-2}{s_1-1} [k-1] - \sum_{\substack{a+b=k \\ a > s_1}} \left( \binom{a-1}{s_1-1}+\binom{a-1}{s_2-1} - \delta_{a,s_2} \right) [a,b]  \,. 
\end{align*}
where $s_1,s_2 > 0$ can be choosen arbitrary such that $k=s_1 + s_2$. First divide both sides by $\binom{k-2}{s_1-1} (k-2)^{-1}$. Whenever $k\geq 4$ all elements on the right of the resulting equation belong to $\MD^\sharp$ except for the term with $[k-1,1]$. By direct calculation one obtains that for $s_1=1$ and $s_2=k-1$ the coefficient of $[k-1,1]$ is $-(k-2)$ and for $s_2=2$ and $s_2=k-2$ it is $-2(k-2)$ and therefore $\dif[k-2]$ can be expressed as an element in $\MD^\sharp$.  
\end{prf}

 Since $\dif$ is a derivation it  satisfies the Leibniz rule. Therefore 
the above proposition allows us to derive further identites, e.g.
\[
\dif \QZ (k,...,k), \dif \left( \QZ (k_1,k_2)+ \QZ (k_2,k_1) \right) \in \qMZV.
\]

\begin{ex} Some examples of representations of $\dif \OZ(s)$ in $\qMZV$.
\begin{align*}
\dif Z(2) &= 3 Z(4) + Z(2) - Z(2,2) \,,\\
\dif Z(3) &= 5 Z(5) + Z(3) - 4 Z(3,2) - 6 Z(2,3) \,, \\
\dif Z(4) &= 10 Z(6) + 2 Z(4) + 4 Z(4,2) - 8 Z(2,4) - 6 Z(3,3) \,, \\
\dif Z(2,2) &= -6 Z(6) - 12 Z(2,2,2) - 15 Z(4,2) + 3  Z(2,4) + 9 Z(3,3) \,, \\
\dif Z(3, 3) &= 4Z(8)-12Z(2, 3, 3)-10Z(3, 2, 3)-8Z(3, 3, 2)\\
&+Z(3, 5)-Z(5, 3)+8Z(6, 2)+3Z(3, 3) \,, \\
\dif Z(2, 2, 2) &= -24Z(2, 2, 2, 2)+9Z(2, 3, 3)+9Z(3, 2, 3)+6Z(3, 3, 2)\\
&-15Z(4, 2, 2)-15Z(2, 4, 2)+3Z(2, 2, 4)-6Z(2, 6)+6Z(5, 3)-6Z(6, 2) \,.
\end{align*}
\end{ex}
At the end we give some conjectured representations of  $\dif \OZ(s)$ in $\qMZV$ coming from numerical experiments and which where checked for the first $200$ coefficients but which should be also provable by using the results in \cite{bk}:   
\begin{align*}
\dif Z(2,3) &= 2Z(7)-16Z(2, 2, 3)-4Z(2, 3, 2)-8Z(3, 2, 2)\\
&-15Z(4, 3)-4Z(3, 4)+4Z(5, 2)+5Z(2, 5)+Z(3, 2)-Z(2, 3) \,.
\end{align*}

\section{A refined conjecture and numerical evidences}
In his article Okounkov states the following conjecture:

\begin{conj}(Okounkov)\label{Okdimconj} \begin{enumerate}[i)]
\item   The algebra $\qMZV$ is spanned by $Z(s)$ with $2 \le s_i \le 5$.  
 
\item  The Hilbert series of the graded algebra $\grw \qMZV$ equals
 \begin{align}\label{Okdim}
\!\!\!\!\!\!\!\!\!  \sum_k  \dim_\Q \grw_k \!\!\qMZV  \, x^k = \frac{1}{ 1- x -x^2 -x^3 -x^4 -x^5 + x^8 +x^9+  x^{10}+  x^{11} +  x^{12} }.
  \end{align}
  
\end{enumerate}  
\end{conj}

We conjecture in addition 
\begin{conj} \label{BKdim}
\begin{enumerate}[i)]
\item The algebra $\qMZV$ is isomorphic to a free graded polynomial algebra. 
  \item The algebra $\qMZV$ is isomorphic to the  tensor product of the algebra 
of quasi-modular forms $\widetilde{M}_\Q$ with a graded algebra $\mathcal{A}$ that has the Hilbert series
 \begin{align}\label{HgrwA}
  \sum_k  \dim_\Q \grw_k \!\!\mathcal{A}  \, x^k =  \frac{1}{ 1-\displaystyle{{\frac {{x}^{3}}{ \left( 1-{x}^{2} \right) ^{2}}}+2\,{\frac {{x}^{
12}}{ \left( 1-{x}^{4} \right)  \left(1 -{x}^{6} \right)  \left( 1-{x
}^{2} \right) }}}
    }.
\end{align}
\item    
The algebra  $\mathcal{A}$ is graded by the weight and by the length and it satisfies
\begin{align}\label{HgrwlA}
  \sum_{k,l}  \dim_\Q \grwl_{k,l} \!\!\mathcal{A}   \, x^k t^l =  
\frac{1}{\displaystyle{ 1-   
  {\frac {{x}^{3}t}{ \left( 1-{x}^{2} \right)  \left(1 -{x}^{2}t
 \right) }}+{\frac {{x}^{12} \left( {t}^{3}+{t}^{2} \right) }{ \left( 
1- {x}^{4} \right)  \left( 1-{x}^{6}\right)  \left(1 -{x}^{2}t
 \right) }}}}
.
\end{align}
The algebra $\qMZV$ is therefore graded by the weight and filtered by the length and its Hilbert series equals the product of \eqref{HgrwlM} and \eqref{HgrwlA}.

\end{enumerate}
\end{conj}

It is easy to see that \eqref{HgrwlA} implies \eqref{HgrwA} by setting $t=1$. 
Moreover \eqref{HgrwA} and the well-known dimension formula for the quasi-modular forms imply \eqref{Okdim}.  

\begin{rem}
One should view Okounkovs conjectures \ref{Okdimconj} as analogues of the Hoffmann conjecture, now Brown's theorem, and the Zagier conjecture for multiple zeta values. In this context our conjectures\ref{BKdim}
would then correspond to the Broadhurst-Kreimer conjecture. In \cite{bk3} we study similar conjectures for the algebra $\MD$ and the algebra of bi-brackets, which are a natural generalization of the brackets  \cite{Babibra}.
\end{rem}

With the Computer algebra package \cite{pari} we calculated  in table \ref{tab:filok-bounds} lower bounds for 
  $\filwle_{k,l} \qMZV$ and according to the above conjectures these  numbers should be the exact dimensions. The same holds for similar tables with different maximal values for the weight $k$ and the length $l$, such that $\dim_\Q  \filwle_{k,l} \qMZV < 5000$.

\begin{table}[H]
\centering
\resizebox{1.\textwidth}{!}{
\begin {tabular} {c|ccccccccccc} 
 k $\backslash$ l&1&2&3&4&5&6&7&8&9&10&11
\\ \hline
 1&1&0&0&0&0&0&0&0&0&0&0\\  2&2&2
&0&0&0&0&0&0&0&0&0\\  3&3&3&3&0&0&0&0&0&0&0&0
\\  4&4&5&5&5&0&0&0&0&0&0&0\\  5&5&8
&8&8&8&0&0&0&0&0&0\\  6&6&11&12&12&12&12&0&0&0&0&0
\\  7&7&15&19&19&19&19&19&0&0&0&0
\\  8&8&19&27&28&28&28&28&28&0&0&0
\\  9&9&24&38&43&43&43&43&43&43&0&0
\\  10&10&29&51&63&64&64&64&64&64&64&0
\\  11&11&35&67&90&96&96&96&96&96&96&96
\\  12&12&41&85&125&142&143&143&143&143&143&143
\\  13&13&48&107&170&206&213&213&213&213&213&213
\\  14&14&55&132&225&293&316&317&317&317&317&317
\\  15&15&63&160&292&408&462&470&470&470&470&470
\\  16&16&71&192&374&557&667&697&698&698&698&698
\\  17&17&80&228&470&745&947&1025&1034&1034&1034&1034
\\  18&18&89&268&584&983&1323&1494&1532&1533&1533&
1533\\  19&19&99&312&717&1275&1815&2151&2260&2270&
2270&2270\\  20&20&109&361&871&1632&2455&3057&3314&
3361&3362&3362\\  21&21&120&414&1047&2064&3271&4280&
4818&4966&4977&4977.
\end{tabular}}
\caption{Lower bounds for $\dim_\Q  \filwle_{k,l}\qMZV$ \label{tab:filok-bounds}}
\end{table}

Finally we remark, that in accordance with Okounkov's conjecture \ref{Ok_derconj}
adjoining all derivatives does not increase 
the lower bounds for $ \dim \filw_k \qMZV$  up to $k=19$.

{\small
{\it Addresses:}\\\texttt{henrik.bachmann@math.nagoya-u.ac.jp}\\
\noindent {\sc Graduate School of Mathematics\\
Nagoya University\\
Chikusa-ku, Nagoya, 464-8602\\
Japan}\\
\texttt{kuehn@math.uni-hamburg.de}\\
\noindent {\sc Fachbereich Mathematik (AZ)\\ Universit\"at Hamburg\\ Bundesstrasse 55\\ 20146 Hamburg\\
Germany}}


\begin{thebibliography}{40}

\bibitem[Ok]{ao}  A. Okounkov:
{\it Hilbert schemes and multiple $q$-zeta values}, Funct. Anal. Appl. 48 (2014), pp. 138-144.

\bibitem[Ba]{Babibra} H. Bachmann: {\it The algebra of bi-brackets and regularised multiple Eisenstein series}, preprint
 arXiv:1504.08138 [math.NT].
 
\bibitem[BK1]{bk}  H. Bachmann, U. K\"uhn:
{\it The algebra of generating functions for multiple divisor sums and applications to multiple zeta values}, Ramanujan J. 40 (2016), no. 3, pp. 605-648.

\bibitem[BK2]{bk3} H. Bachmann, U. K\"uhn: {\it On the generators of the algebra of bi-brackets}, in preparation.

\bibitem[GKZ]{gkz} H. Gangl, M. Kaneko, D. Zagier:
{\it Double zeta values and modular forms}, in "Automorphic forms and zeta functions" pp. 71-106, World Sci. Publ., Hackensack, NJ, 2006.
 
\bibitem[Fo]{df} D. Foata:
{\it Eulerian Polynomials: from Euler's Time to the Present}, The Legacy of Alladi Ramakrishnan in the Mathematical Sciences, Springer Science+Business Media, pp. 253-273, LLC 2010. 

\bibitem[OOZ]{ooz} Y. Ohno, J. Okuda , W. Zudilin
{\it Cyclic q-MZSV SUM}, Journal of Number Theory, 132 (2012), pp. 144-155.


\bibitem[PARI/GP]{pari}
    The PARI~Group, PARI/GP version {\tt 2.8.0}, Bordeaux, 2016, \url{http://pari.math.u-bordeaux.fr/}.

\bibitem[Ta]{yt} Y. Takeyama, 
{\it The algebra of a q-analogue of multiple harmonic series.} SIGMA Symmetry Integrability Geom. Methods Appl. 9 (2013), Paper 061. 

\bibitem[Br]{db} D. M. Bradley,
{\it Multiple q-zeta values}, J. Algebra, 283 (2005), pp. 752-798.

\bibitem[Zu]{wz}  W. Zudilin
{\it Algebraic relations for multiple zeta values}, (Russian. Russian summary) Uspekhi Mat. Nauk 58 (2003). 


\end{thebibliography}
\end{document}